\newtheorem{ex}{Example}
\newtheorem{thm}{Theorem}
\newtheorem{prop}{Proposition}
\newtheorem{cor}{Corollary}
\newtheorem{lem}{Lemma}
\newtheorem{defn}{Definition}
\newtheorem{rmk}{Remark}
\def\supp {\mathrm{supp}}
\title{On polynomial automorphisms of Nagata type}
\author{Huarcaya, Jorge A. C.  $^{1}$}
\author{Palacios, Joe $^{2,3}$}
\address{$^1$ Universidad Nacional Mayor de San Marcos, UNMSM}
\address{$^2$ Universidad Nacional de Ingeniería, UNI}
\address{$^3$ Instituto de Matemática y Ciencias Afines, IMCA}
\email{jcoripacoh@unmsm.edu.pe, jpalacios@imca.edu.pe}
\date{}
\begin{document}

	\begin{abstract}
			We define a family of polynomial ring homomorphisms generalizing the well-known Nagata automorphism. We establish necessary and sufficient conditions under which these homomorphisms are automorphisms, and verify that they satisfy the Jacobian conjecture.  
		 Additionally, we provide a necessary condition within this family to obtain wild automorphisms, and independently derive a property related to the upper semicontinuity of the \L{}ojasiewicz exponent at infinity. 
	\end{abstract}

	\subjclass[2010]{14R10, 13B25, 13F20, 35F05}
	
	\keywords{polynomial automorphisms,  Nagata automorphism,  wild automorphisms, linear PDE, \L{}ojasiewicz exponent}
	
	\maketitle

	\tableofcontents
	
\section{Introduction}

In what follows $F$ will be a field of characteristic $0$ unless otherwise stated. The polynomials 
\begin{equation}\label{nagaut}
	\begin{split}
		f(x,y,z)&=x-2y(zx+y^2)-z(zx+y^2)^2,\\
		g(x,y,z)&=y+z(zx+y^2),\\
		h(x,y,z)&=z.   
	\end{split}
\end{equation}
define an automorphism of $F[x,y,z]$ called Nagata automorphism \cite{Nag72}.
The determinant of the Jacobian matrix of this automorphism turns out to be $1$.

An automorphism is {\em tame} if it is  a finite composition of elementary and linear automorphisms. An automorphism that is not tame is called {\em wild} \cite{SU04}.

In 1972, Nagata conjectured that the aforementioned automorphism is wild. This conjecture was proved in \cite[Corollary 9]{SU04}.

In this work, we define a family of ring homomorphisms on $F[x,y,z]$, given in \eqref{eqnag}, generalizing the Nagata automorphism, which includes the family of automorphisms constructed by Bass \cite{Bass84} in some sense, but is quite different to the family of automorphisms constructed in \cite[(1)]{Kur14}. Our family provides wild automorphisms under some assumptions, see Corollary \ref{corwild}. In Theorem \ref{Teorpr}, we solve a linear partial differential equation, whose solution is related to the non-vanishing of the Jacobian determinant of those automorphisms. As a consequence, we obtain a necessary and sufficient condition on homomorphisms of Nagata type  with nonzero Jacobian determinant to be automorphisms, which is a particular case of the famous Jacobian conjecture, see  Corollary \ref{mainth}.

\bigskip

\noindent{\sc Acknowledgments.} We are grateful to Carles Bivià-Ausina for valuable comments on an early version of this article. 

\section{Nagata polynomial map}
In the following paragraph we recall the notion of tame automorphisms of polynomials.  

Fix a natural number $n\geq 2$. For each index $1\leq i\leq n$, the $i$th {\em elementary automorphism} of $F^n$ on itself is the automorphism defined as 
$$(x_1,\dots,x_n)\mapsto\Big(x_1,\dots, x_{i-1}, x_i+f(x_1,\dots,x_{i-1},x_{i+1},\dots,x_n), x_{i+1},\dots,x_n\Big)\,,$$
where $f$ is a polynomial map in $n-1$ variables, and its inverse is the automorphism 
$$(x_1,\dots,x_n)\mapsto\Big(x_1,\dots, x_{i-1}, x_i-f(x_1,\dots,x_{i-1},x_{i+1},\dots,x_n), x_{i+1},\dots,x_n\Big)\,.$$
It turns out that the Jacobian matrix of each elementary automorphism has determinant $1$. 

Since $F$ is an infinite field, there is a one-to-one correspondence between polynomial maps $F^n\to F^n$ and  $n$-tuples of polynomials in $F[x_1,\dots,x_n]$.   
In what follows, we shall focus on the case $n=3$.

\begin{defn}
	{\em 
		For a polynomial $\varphi\in F[x,y,z]$, we set
		\begin{equation}\label{eqnag}
			\begin{split}
				f(x,y,z)&=x-2y \varphi-z\varphi^2\,,\\
				g(x,y,z)&=y+z\varphi\,,\\
				h(x,y,z)&=z\,.     
			\end{split}
		\end{equation}
		The triplet $(f,g,h)$ induces an ring homomorphism on $F[x,y,z]$ which will be called {\em Nagata homomorphism}.	
	}
\end{defn}
Note that  $(f,g,h)$ is the identity when one takes $\varphi=0$, so $(f,g,h)$ may be seen as a perturbation of the identity. If $\varphi=xz+y^2$ then $(f,g,h)$ is the Nagata automorphism \eqref{nagaut}. If $\varphi=\lambda (xz+y^2)$ with $\lambda\in F$, then we obtain a family of automorphisms similar to those constructed by Bass \cite{Bass84}, \cite[3.8.1]{Freu17}.

The Milnor number of $(f,g,h)$ is the dimension
$$\mu(f,g,h)=\dim_F \frac{F[x,y,z]}{\langle f,g,h\rangle}$$
as vector space over $F$. 
\begin{lem}\label{lemmilnor}
	If $(f,g,h)$ is a Nagata homomorphism, then $\mu(f,g,h)=1$.    
\end{lem}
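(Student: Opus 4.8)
The plan is to show that the ideal $I=\langle f,g,h\rangle$ coincides with the maximal ideal $\mathfrak{m}=\langle x,y,z\rangle$, which will immediately give $\mu(f,g,h)=\dim_F F[x,y,z]/\mathfrak{m}=1$. The inclusion $I\subseteq\mathfrak{m}$ is the only slightly delicate point, and the reverse inclusion is then essentially bookkeeping.

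First I would observe that $h=z\in I$, so we may work modulo $z$. Reducing the generators modulo $z$, we get $g\equiv y\pmod z$ and $f\equiv x-2y\varphi(x,y,0)\pmod z$. In particular $y\in I+\langle z\rangle$, hence also $y\in I$ once we know $z\in I$ — more carefully, from $g=y+z\varphi$ and $h=z$ we get $y=g-h\varphi\in I$. Then from $f=x-2y\varphi-z\varphi^2$ together with $y,z\in I$ we get $x=f+2y\varphi+z\varphi^2\in I$. Therefore $x,y,z\in I$, i.e. $\mathfrak{m}\subseteq I$. Conversely each of $f,g,h$ visibly lies in $\mathfrak{m}$ (each has zero constant term, as $f(0,0,0)=g(0,0,0)=h(0,0,0)=0$), so $I\subseteq\mathfrak{m}$. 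Hence $I=\mathfrak{m}$ and the quotient is the field $F$, of dimension $1$.

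The only thing to be careful about is the direction of the argument: one should not try to ``solve'' the system naively but rather use the triangular structure of \eqref{eqnag} — $h$ gives $z$, then $g$ gives $y$, then $f$ gives $x$ — exactly as one does when checking that an elementary/triangular map fixes the origin with multiplicity one. The main (minor) obstacle is simply phrasing the elimination cleanly so that it is manifestly valid over the polynomial ring $F[x,y,z]$ and not merely at the level of the variety; since $\varphi\in F[x,y,z]$, the identities $y=g-\varphi h$ and $x=f+2\varphi y+\varphi^2 h$ are genuine polynomial identities, so no subtlety arises. I would also remark that this is consistent with the Jacobian determinant being $1$: a finite map that is a perturbation of the identity and has Milnor number $1$ at the origin is locally a bijection there, which is the geometric shadow of the computation.
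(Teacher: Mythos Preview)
Your proof is correct and follows essentially the same route as the paper: both show $\langle f,g,h\rangle=\langle x,y,z\rangle$ by using the triangular structure to recover $z$, then $y=g-\varphi h$, then $x$ from $f$, and conclude $\mu=1$. The only cosmetic difference is that the paper expresses $x=f+2\varphi g-\varphi^2 z$ directly in terms of $f,g,z$, whereas you write $x=f+2\varphi y+\varphi^2 z$ after already having $y\in I$; and you spell out the (trivial) reverse inclusion $I\subseteq\mathfrak{m}$, which the paper leaves implicit.
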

\begin{proof}
	One has,
	\begin{align*}
		y&=(y+z \varphi)-     \varphi z=g-\varphi z,\\
		x&=(x-2y \varphi-z\varphi^2)+2(y+z \varphi)\varphi-\varphi^2 z=f+2\varphi g-\varphi^2 z,
	\end{align*}
	hence we get an equality $\langle f,g,h\rangle=\langle x,y,z\rangle$ of polynomial ideals, and this implies that $\mu(f,g,h)=1$.   
\end{proof}

\begin{rmk}
	{\em 
	Note that the Nagata homomorphism $(f,g,h)$ is not always an automorphism for any $\varphi$, despite the fact that $\mu(f,g,h)=1$ by Lemma \ref{lemmilnor}. For instance, if $\varphi=x$, then $(f,g,h)$ is not an automorphism, because as a map $(f,g,h)$ sends both points $(0,0,1)$ and $(-1,1,1)$ to $(0,0,1)$. Similarly, $(f,g,h)$ is not an automorphism for $\varphi=y$. In contrast, Corollary \ref{mainth} provides a necessary and sufficient condition on $\varphi$, in order that its corresponding Nagata homomorphism $(f,g,h)$ is an automorphism.
}	
\end{rmk}

\begin{prop}\label{detedp}
	The Jacobian matrix  $J(f,g,h)$ of the Nagata homomorphism $(f,g,h)$, as in \eqref{eqnag}, has nonzero determinant in $F$ if and only if $\varphi$ satisfies the linear partial differential equation:
	$$-2y\varphi_x+z\varphi_y=0\,.$$
\end{prop}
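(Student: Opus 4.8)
The plan is a one-line determinant expansion, a short simplification, and a small observation about polynomials vanishing on a line. Since $h=z$, the Jacobian matrix $J(f,g,h)$ has third row $(0,0,1)$, so expanding the determinant along that row gives $\det J(f,g,h)=f_xg_y-f_yg_x$. I would compute the four relevant partial derivatives keeping $g=y+z\varphi$ visible, which is what makes the algebra collapse: $f_x=1-2g\varphi_x$, $f_y=-2\varphi-2g\varphi_y$, $g_x=z\varphi_x$, $g_y=1+z\varphi_y$.

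Substituting and expanding the product, the two terms carrying the factor $\varphi_x\varphi_y$ are both equal to $2gz\,\varphi_x\varphi_y$ and cancel, and the two terms $\pm 2z\varphi\varphi_x$ cancel as well; what survives is $\det J(f,g,h)=1-2y\varphi_x+z\varphi_y$. This already yields one implication: if $\varphi$ solves the equation $-2y\varphi_x+z\varphi_y=0$, then $\det J(f,g,h)=1$, a nonzero element of $F$.

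For the converse I would use the following remark, which is the only non-mechanical step: the polynomial $P:=-2y\varphi_x+z\varphi_y$ vanishes identically on the line $\{y=z=0\}$, since each of its two terms carries a factor $y$ or a factor $z$. Consequently $P$ cannot be a nonzero constant. Hence if $\det J(f,g,h)=1+P$ lies in $F$, then $P$ lies in $F$, so $P$ is a constant vanishing along a line, forcing $P=0$; and in that case $\det J(f,g,h)=1\neq 0$ automatically. Thus $\det J(f,g,h)$ is a nonzero element of $F$ precisely when $-2y\varphi_x+z\varphi_y=0$, which is the claimed equivalence.

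The only place demanding care is bookkeeping the cancellations in the $2\times 2$ determinant; I expect no genuine obstacle here. As a sanity check I would test the classical case $\varphi=xz+y^2$, where $-2y\varphi_x+z\varphi_y=-2yz+2zy=0$, consistent with the fact that the Nagata automorphism has Jacobian determinant $1$.
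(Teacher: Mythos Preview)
Your argument is correct and follows essentially the same route as the paper: expand the Jacobian along the third row, simplify the $2\times 2$ minor to $1-2y\varphi_x+z\varphi_y$, and then observe that the polynomial $-2y\varphi_x+z\varphi_y$ has no constant term, so the determinant can only be a nonzero scalar if that polynomial vanishes identically. Your packaging of the partial derivatives via $f_x=1-2g\varphi_x$, $f_y=-2\varphi-2g\varphi_y$ is a tidy way to see the cancellations, and your justification of the converse (vanishing along $\{y=z=0\}$) spells out explicitly what the paper leaves as the one-line remark ``this equality is only possible when $c=1$''.
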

\begin{proof}
	We have
	\begin{align*}
		f_x&=1-2y\varphi_x-2z\varphi\varphi_x,\\
		f_y&=-2\varphi-2y\varphi_y-2z\varphi\varphi_y,\\
		g_x&=z\varphi_x,\\
		g_y&=1+z\varphi_y\,.
	\end{align*}
	Hence the Jacobian matrix $J(f,g,h)$ is 
	\[\begin{pmatrix}
		1-2y\varphi_x-2z\varphi\varphi_x&-2\varphi-2y\varphi_y-2z\varphi\varphi_y&f_z\\
		z\varphi_x&1+z\varphi_y&g_z\\
		0&0&1
	\end{pmatrix}
	\]
	whose determinant is 
	\[\begin{vmatrix}
		1-2y\varphi_x-2z\varphi\varphi_x&-2\varphi-2y\varphi_y-2z\varphi\varphi_y\\
		z\varphi_x&1+z\varphi_y
	\end{vmatrix}=1-2y\varphi_x+z\varphi_y\,.\]
	If $-2y\varphi_x+z\varphi_y=0$ then the determinant of $J(f,g,h)$ is $1$. Conversely, if the determinant of $J(f,g,h)$ is a constant $c\in F-\{0\}$, then 
	$$1-2y\varphi_x+z\varphi_y=c\,.$$
	However this equality is only possible when $c=1$, therefore $-2y\varphi_x+z\varphi_y=0$.
\end{proof}

\begin{prop}\label{automp}
	Let  $p\in F[t_1,t_2]$ and set $\varphi=p(xz+y^2,z)$ in $F[x,y,z]$. Then 
	the corresponding Nagata homomorphism $(f,g,h)$ 
	is an automorphism of $F[x,y,z]$ with inverse:
	\begin{equation}\label{pinverse}
		\begin{split}
			f'(x,y,z)&=x+2y \varphi-z\varphi^{2},\\
			g'(x,y,z)&=y-z\varphi,\\
			h'(x,y,z)&=z\,.
		\end{split}
	\end{equation}
\end{prop}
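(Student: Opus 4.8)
The plan is to exhibit the homomorphism $(f',g',h')$ of \eqref{pinverse} as a two-sided inverse of $(f,g,h)$. Since both triplets induce ring endomorphisms of $F[x,y,z]$, and such an endomorphism is determined by the images of the generators, it suffices to check that each of the two composites sends $x\mapsto x$, $y\mapsto y$, $z\mapsto z$.

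The crucial observation is that the polynomials $xz+y^2$ and $z$ are fixed by any Nagata homomorphism. Indeed, a direct substitution gives
\[
fh+g^2=(x-2y\varphi-z\varphi^2)z+(y+z\varphi)^2=xz+y^2,
\]
while $h=z$. Hence the Nagata homomorphism maps $xz+y^2\mapsto xz+y^2$ and $z\mapsto z$, and therefore $\varphi=p(xz+y^2,z)$ is itself fixed: its image is $p(fh+g^2,h)=p(xz+y^2,z)=\varphi$. The same applies to $(f',g',h')$, which is exactly the Nagata homomorphism attached to $-\varphi$ (equivalently, to $q:=-p\in F[t_1,t_2]$ via $-\varphi=q(xz+y^2,z)$); so it too fixes $xz+y^2$, $z$, and hence $\varphi$.

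With these invariances, computing the composite $(f,g,h)\circ(f',g',h')$ becomes a short polynomial identity. Applying $(f,g,h)$ to $f'=x+2y\varphi-z\varphi^2$ and using that it sends $x\mapsto f$, $y\mapsto g$, $z\mapsto z$, $\varphi\mapsto\varphi$, one gets $f+2g\varphi-z\varphi^2$; substituting the definitions of $f$ and $g$ and cancelling yields $x$. Similarly the image of $g'=y-z\varphi$ is $g-z\varphi=y$, and the image of $h'=z$ is $z$. The reverse composite $(f',g',h')\circ(f,g,h)$ is handled identically, being the same computation with $\varphi$ replaced by $-\varphi$. Thus $(f,g,h)$ is an automorphism of $F[x,y,z]$ with inverse $(f',g',h')$.

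I do not anticipate a genuine obstacle; the only point that must not be skipped is the invariance of $\varphi$, which is precisely what the hypothesis $\varphi=p(xz+y^2,z)$ guarantees and which fails for general $\varphi$ (compare the Remark, where $\varphi=x$ or $\varphi=y$ does not yield an automorphism). As a consistency check, note that $\varphi=p(xz+y^2,z)$ automatically satisfies the equation $-2y\varphi_x+z\varphi_y=0$ of Proposition \ref{detedp} by the chain rule, in agreement with the Jacobian determinant being $1$.
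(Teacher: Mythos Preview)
Your argument is correct and follows essentially the same route as the paper: both proofs hinge on the invariance of $\varphi$ under the homomorphisms, deduced from the identity $fh+g^2=xz+y^2$ (the paper writes this as $\varphi(f',g',h')=\varphi$ after computing $f'z+g'^2=xz+y^2$), and then reduce each composite to a two-line cancellation. Your additional remark that $(f',g',h')$ is itself the Nagata homomorphism attached to $-\varphi$ makes the symmetry between the two composites more transparent than the paper's ``Similarly'', but the underlying computation is the same.
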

\begin{proof}
	We have
	\begin{align*}
		\varphi(f',g',h')&=p(f'z+g'^2,z)\\
		&=p\Big((x+2y \varphi-z\varphi^{2})z+(y-z\varphi)^2,z\Big)\\
		&=p(xz+y^2,z)\\
		&=\varphi\,,
	\end{align*}
	thus  $ \varphi(f',g',h')=\varphi$. Hence, 
	\begin{align*}
		f(f',g',h')&=f'-2g'\varphi(f',g',h')-z\varphi^{2}(f',g',h')\\
		&=(x+2y \varphi-z\varphi^{2})-2(y-z\varphi)\varphi-z\varphi^{2}\\
		&=x\,.
	\end{align*}
	On the other hand, we have
	\begin{align*}
		g(f',g',h')&=g'+z\varphi(f',g',h')\\
		&=(y-z\varphi)+z\varphi\\
		&=y\,.
	\end{align*}
	Therefore, the composite $(f,g,h)\circ(f',g',h')$ is the identity. Similarly, one verifies that  $(f',g',h')\circ(f,g,h)$ is also the identity.   
\end{proof}

We give the following definition in order to determine the highest weighted homogeneous component of the polynomial $\varphi=p(xz+y^2,z)$.

\begin{defn}
	{\em Let us fix algebraically independent indeterminates $t_1, \dots, t_n$ over $F$. For $k=(t_1,\dots, t_n)\in \mathbb{Z}^n_{\geq 1}$, we write $t^k$ to denote the monomial $t_1^{k_1}\cdots t_n^{k_n}$. Any polynomial $h\in F[t_1,\dots, t_n]$ can be written as $h=\sum_{k}a_kt^k$. 
		\begin{enumerate}
			\item The
			{\em support} of $h$, denoted by $\supp(h)$, is defined as the set
			\begin{equation}\nonumber
				\supp(h)=\{k\in \mathbb{Z}^n_{\geq 0}: a_k\neq 0\}\,.
			\end{equation}
			\item Let $v=(v_1,\dots,v_n)\in \mathbb{Z}^n_{\geq 1}$. The {\em degree} of $h$ with respect to $v$, denoted by $d_v(h)$, is the maximum of the scalar product $\langle v,k\rangle$ such that $a_{k}\neq 0$. We denote by $h^{
				v}$ the sum of those terms $a_{k}t^{k}$ such that  $\langle v,k\rangle=d_v(h)$. In particular,  for $v=(1,\dots,1)$, we write $$\overline{h}=h^v\,,$$ that is the highest degree homogeneous component of $h$, and in this case we have $d_v(h)=\deg(h)$.
			\item  We say that $h$ is {\em weighted homogeneous} with respect to $v$ when $h^{v}=h$.
		\end{enumerate}
		
	}
\end{defn}

\begin{prop}\label{autwild}
	Let $\varphi\in F[x,y,z]$ and suppose that its corresponding Nagata homomorphism $(f,g,h)$ is an automorphism of $F[x,y,z]$. If one the partial derivatives  $\overline{\varphi}_x, \overline{\varphi}_y$ is nonzero, then the automorphism $(f,g,h)$ is wild.   
\end{prop}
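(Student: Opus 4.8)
The plan is to read off the leading forms of $(f,g,h)$ and then obstruct tameness via the Shestakov--Umirbaev reduction theory of \cite{SU04}. The hypothesis that $\overline{\varphi}_x$ or $\overline{\varphi}_y$ is nonzero says precisely that $\overline{\varphi}$ is not a polynomial in $z$ alone; in particular $d:=\deg\varphi=\deg\overline{\varphi}\geq 1$. Inspecting \eqref{eqnag} one checks that for $d\geq 1$ there is no top-degree cancellation (the term $z\varphi^{2}$ dominates in $f$, and $z\varphi$ dominates in $g$), so
\[
\overline{f}=-z\,\overline{\varphi}^{\,2},\qquad \overline{g}=z\,\overline{\varphi},\qquad \overline{h}=z,
\]
whence $(f,g,h)$ has multidegree $(2d+1,\,d+1,\,1)$ and the three leading forms obey the single relation $\overline{h}\,\overline{f}+\overline{g}^{\,2}=0$. (Since $(f,g,h)$ is an automorphism, its Jacobian determinant lies in $F\setminus\{0\}$, so Proposition~\ref{detedp} gives in addition $-2y\varphi_x+z\varphi_y=0$; this fact is not needed below but records that the determinant is $1$.)

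Next I would invoke the Shestakov--Umirbaev reduction theorem: a tame automorphism of $F[x,y,z]$ whose three components have degrees summing to more than $3$ admits either an elementary reduction or a reduction of one of the four types I--IV of \cite{SU04}. Here the degrees sum to $3d+3\geq 6$, so it suffices to show $(f,g,h)$ admits no reduction of any kind. Excluding elementary reductions is routine degree bookkeeping: $\overline{h}=z$ has the strictly smallest degree, so $h$ is not elementarily reducible; an elementary reduction of $f$ would force $-z\,\overline{\varphi}^{\,2}$ to lie in the degree-$(2d+1)$ homogeneous piece of the graded subalgebra $F[z\overline{\varphi},z]=F[\overline{g},\overline{h}]$, which a monomial count shows is spanned by $z^{2d+1}$ and $z^{d+1}\overline{\varphi}$, and membership there would require $z^{d}\mid\overline{\varphi}^{\,2}$, impossible by a $z$-adic valuation-and-degree comparison once $\overline{\varphi}\notin F[z]$; the case of $g$ is similar and easier, since the degree-$(d+1)$ piece of $F[z\overline{\varphi}^{\,2},z]=F[\overline{f},\overline{h}]$ is merely $F z^{d+1}$, not containing a multiple of $z\overline{\varphi}$.

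The substantive step — and the one I expect to be the main obstacle — is the exclusion of the four non-elementary reduction types. Each type in \cite{SU04} is governed by explicit degree inequalities among the three components together with the requirement that a suitable pair among them be a reduced pair whose degrees fall in the ``reducible'' range. I would feed the multidegree $(2d+1,d+1,1)$ and the leading-form relation $\overline{h}\,\overline{f}=-\overline{g}^{\,2}$ into these conditions: this is, up to the substitution $\overline{\varphi}_0\mapsto\overline{\varphi}$, exactly the leading-form configuration of the classical Nagata automorphism ($\varphi_0=xz+y^{2}$, multidegree $(5,3,1)$), for which Shestakov and Umirbaev's case analysis rules out all four types using only the degree sequence and this relation; the expectation is that the same analysis goes through with $d$ treated as a parameter ranging over all integers $\geq 1$. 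The delicate point is that one genuinely cannot just quote the Nagata case: already for $d=1$ the bare multidegree $(3,2,1)$ is realized by tame automorphisms, so the exclusion must use the finer fact that $\overline{f},\overline{g},\overline{h}$ are \emph{not} all contained in $F[z]$ — equivalently that $\{\overline{f},\overline{g}\}$ and $\{\overline{g},\overline{h}\}$ are reduced pairs with $\deg\overline{g}\nmid\deg\overline{f}$ — which is exactly what the hypothesis on $\overline{\varphi}_x,\overline{\varphi}_y$ secures. Once every reduction type is excluded, the reduction theorem forces $(f,g,h)$ to be non-tame, that is, wild.
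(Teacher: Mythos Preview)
Your overall strategy --- compute the leading forms, record the multidegree $(2d+1,d+1,1)$ and the relation $\overline{h}\,\overline{f}+\overline{g}^{2}=0$, then obstruct every Shestakov--Umirbaev reduction --- is viable in principle, but you leave the decisive step unfinished: the exclusion of the non-elementary reduction types~I--IV is only an ``expectation'', not an argument. As you yourself note, the bare multidegree does not suffice (already $(3,2,1)$ is tame-realizable), so the promised case analysis really would have to be carried out in full, and that is precisely the laborious part of \cite{SU04}.

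The paper avoids this entirely by invoking the packaged criterion \cite[Corollary~8]{SU04}. After writing down the same leading forms $\overline{f}=-z\overline{\varphi}^{2}$, $\overline{g}=z\overline{\varphi}$, $\overline{h}=z$, it checks two conditions. First, each pair among $\overline{f},\overline{g},\overline{h}$ is algebraically independent: the paper simply computes all nine $2\times 2$ Jacobian minors (the Poisson brackets) and observes that the hypothesis on $\overline{\varphi}_x,\overline{\varphi}_y$ forces at least one minor in each triple to be nonzero, whence pairwise independence by \cite[Lemma~1]{SU04} or \cite[Corollary~2]{SU05}. Second, none of $\overline{f},\overline{g},\overline{h}$ lies in the subalgebra generated by the other two, which follows from $\overline{\varphi}\notin F[z]$; this step is essentially your elementary-reduction exclusion. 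Corollary~8 then gives wildness directly.

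So your approach and the paper's diverge after the leading-form computation: you head toward a direct reduction-type case analysis, while the paper cites a corollary that already encapsulates that analysis and reduces the problem to the two easily verified conditions above. Had you completed the Type~I--IV exclusion you would recover the same conclusion, but at the cost of redoing work that \cite[Corollary~8]{SU04} was proved precisely to package.
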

\begin{proof}
	Let $\overline{f}, \overline{g}, \overline{h}$ be the highest homogeneous component of $f,g,h$, respectively.  Since $\overline{\varphi}_x$ or $ \overline{\varphi}_y$ is nonzero, we have $\deg(\overline{\varphi})>0$, so
	\begin{equation*}
		\begin{split}
			\overline{f}&=-z\overline{\varphi}^2\,,\\
			\overline{g}&=z\overline{\varphi}\,,\\
			\overline{h}&=z\,.     
		\end{split}
	\end{equation*}
	Hence,
	\begin{align*}
		\overline{f}_x\overline{g}_y-  \overline{f}_y\overline{g}_x&=0\\
		\overline{f}_y\overline{g}_z-  \overline{f}_z\overline{g}_y&=-z\,\overline{\varphi}^ 2\overline{\varphi}_y\\
		\overline{f}_x\overline{g}_z-  \overline{f}_z\overline{g}_x&=-z\,\overline{\varphi}^ 2\overline{\varphi}_x\\
		&\\
		\overline{g}_x\overline{h}_y-  \overline{g}_y\overline{h}_x&=0\\
		\overline{g}_y\overline{h}_z-  \overline{g}_z\overline{h}_y&=z\,\overline{\varphi}_y\\
		\overline{g}_x\overline{h}_z-  \overline{g}_z\overline{h}_x&=z\,\overline{\varphi}_x\\
		&\\
		\overline{f}_x\overline{h}_y-  \overline{f}_y\overline{h}_x&=0\\
		\overline{f}_y\overline{h}_z-  \overline{f}_z\overline{h}_y&=-2z\,\overline{\varphi}_y\\
		\overline{f}_x\overline{h}_z-  \overline{f}_z\overline{h}_x&=-2z\,\overline{\varphi}_x\,.
	\end{align*}
	By \cite[Corollary 2]{SU05} or \cite[Lemma 1]{SU04}, we deduce that $\overline{f}, \overline{g}, \overline{h}$ are mutually algebraically independent. On the other hand, from the hypothesis $\overline{\varphi}\in F[x,y,z]$ is a homogeneous polynomial of positive degree such that $\overline{\varphi}\not\in F[z]$, it is not difficult to see that none of the polynomials   $\overline{f}, \overline{g}, \overline{h}$ belongs to the subalgebra generated by the other two polynomials. Therefore, by \cite[Corollary 8]{SU04} $(f,g,h)$ is wild. 
\end{proof}

\begin{lem}\label{lemwild} 
	Let $p\in F[t_1,t_2]$ and $v=(2,1)$. If $\varphi=p(xz+y^2,z)\in F[x,y,z]$, then $\deg(\varphi)=d_v(p)$ and $\overline{\varphi}=p^v(xz+y^2,z)$. In particular, if $p$ is weighted homogeneous with respect to $v=(2,1)$, then $\varphi$ is homogeneous.
\end{lem}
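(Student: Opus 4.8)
The plan is to track how the weighting $v=(2,1)$ on the variables $(t_1,t_2)$ interacts with the substitution $t_1\mapsto xz+y^2$, $t_2\mapsto z$, working with the standard grading $(1,1,1)$ on $F[x,y,z]$. First I would observe that $xz+y^2$ is homogeneous of degree $2$ and $z$ is homogeneous of degree $1$ in $F[x,y,z]$; hence for a monomial $t_1^{a}t_2^{b}$ with $\langle v,(a,b)\rangle=2a+b$, its image $(xz+y^2)^{a}z^{b}$ is homogeneous of degree exactly $2a+b$ in $F[x,y,z]$. This is the key compatibility: the $v$-degree of a monomial in $F[t_1,t_2]$ equals the ordinary degree of its image under the substitution.

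Writing $p=\sum_{(a,b)}c_{ab}\,t_1^{a}t_2^{b}$, the substitution gives $\varphi=\sum_{(a,b)}c_{ab}\,(xz+y^2)^{a}z^{b}$, a sum of homogeneous pieces of degrees $2a+b$. The main point needing justification is that there is no cancellation between pieces of the same degree: if two distinct monomials $t_1^{a}t_2^{b}$ and $t_1^{a'}t_2^{b'}$ have $2a+b=2a'+b'=d$, I must check their images cannot cancel each other. For this I would note that $(xz+y^2)^{a}z^{b}$ has $z$-adic order exactly $b$ with leading term (modulo higher powers of $z$) equal to $y^{2a}z^{b}$, so distinct pairs $(a,b)$ with the same total degree produce distinct leading monomials $y^{2a}z^{b}$; alternatively, since $F$ has characteristic $0$, the map $F[t_1,t_2]\to F[x,y,z]$, $t_1\mapsto xz+y^2$, $t_2\mapsto z$, is injective (the two images are algebraically independent over $F$), so $\varphi=0$ iff $p=0$, and more generally no nontrivial $F$-linear combination of the monomials $(xz+y^2)^{a}z^{b}$ vanishes. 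Consequently the homogeneous component of $\varphi$ of top degree $d=d_v(p)$ is exactly $\sum_{2a+b=d}c_{ab}(xz+y^2)^{a}z^{b}=p^{v}(xz+y^2,z)$, which is nonzero, so $\deg(\varphi)=d_v(p)$ and $\overline{\varphi}=p^{v}(xz+y^2,z)$.

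The final sentence is then immediate: if $p$ is weighted homogeneous with respect to $v=(2,1)$, then $p=p^{v}$, hence $\varphi=p(xz+y^2,z)=p^{v}(xz+y^2,z)=\overline{\varphi}$, so $\varphi$ is homogeneous. I expect the only genuinely nontrivial step to be the no-cancellation claim; the algebraic independence of $xz+y^2$ and $z$ (or equivalently the $z$-order argument isolating the monomial $y^{2a}z^{b}$) handles it cleanly, and everything else is bookkeeping on degrees.
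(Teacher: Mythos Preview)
Your proposal is correct and rests on the same underlying observation as the paper: the substitution $t_1\mapsto xz+y^2$, $t_2\mapsto z$ sends a monomial of $v$-degree $2a+b$ to a homogeneous polynomial of ordinary degree $2a+b$. The execution differs, however. The paper performs a full binomial expansion of $(xz+y^2)^{k_1}z^{k_2}$ into monomials in $x,y,z$, explicitly determines $\supp(\varphi)$ as the set of exponent vectors $(k_1-n,2n,k_1+k_2-n)$, and reads off both $\deg(\varphi)$ and $\overline{\varphi}$ from this description. You instead keep the blocks $(xz+y^2)^{a}z^{b}$ unexpanded, note that each is already homogeneous of the required degree, and reduce everything to a non-cancellation claim at the top degree, which you settle either by the algebraic independence of $xz+y^2$ and $z$ or by the distinctness of the $z$-adic leading terms $y^{2a}z^{b}$. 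Your route is shorter and isolates the one genuinely nontrivial step; the paper's route yields the full monomial support of $\varphi$ as a by-product but leaves the non-cancellation implicit (it follows because the map $(k_1,k_2,n)\mapsto(k_1-n,2n,k_1+k_2-n)$ is injective, so distinct triples produce distinct monomials with nonzero binomial coefficients in characteristic~$0$).
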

\begin{proof}
	Let us write the polymomial $p$ as $p=\sum_k a_{k} t^{k}$. One has
	\begin{align*} 
		\varphi=\sum_k a_{k} (xz+y^2)^{k_1}z^{k_2}=\sum_k a_{k}\sum_{n=0}^{k_1}\binom{k_1}{n}x^{k_1-n}y^{2n}z^{k_1+k_2-n}\,.
	\end{align*}
	From the last equality, we observe that \begin{equation}\label{lem11}
		\supp(\varphi)=\Big\{(k_1-n,2n,k_1+k_2-n): \quad k\in \supp(p)\,\textnormal{and}\,\, 0\leq n\leq k_1\Big\}\,.
	\end{equation}
	Therefore,
	\begin{align} 
		\deg(\varphi)&=\max\Big\{k_1-n+2n+k_1+k_2-n:\quad (k_1,k_2)\in \supp(p)\,\textnormal{and}\, 0\leq n\leq k_1 \Big\}\nonumber\\
		&=\max\Big\{2k_1+k_2:\quad (k_1,k_2)\in \supp(p)\Big\}\nonumber\\
		&=d_v(p)\label{lem12}\,.
	\end{align}
	Hence, by the equalities  given in  (\ref{lem11}) and (\ref{lem12}), the highest degree homogeneous component of $\varphi$ is computed as follows
	\begin{align*} 
		\overline{\varphi}=&\sum_{\substack{k\in \supp(p)\\0\leq n\leq k_1\\2k_1+k_2=d_v(p)}}a_{k}\sum_{n=0}^{k_1}\binom{k_1}{n}x^{k_1-n}y^{2n}z^{k_1+k_2-n}\\
		=&\sum_{\substack{k\in \supp(p)\\0\leq n\leq k_1\\2k_1+k_2=d_v(p)}}a_{k}\left(\sum_{n=0}^{k_1}\binom{k_1}{n}x^{k_1-n}z^{k_1-n}y^{2n}\right)z^{k_2}\\
		=&\sum_{\substack{k\in \supp(p)\\0\leq n\leq k_1\\2k_1+k_2=d_v(p)}}a_{k}\left(\sum_{n=0}^{k_1}\binom{k_1}{n}(xz)^{k_1-n}y^{2n}\right)z^{k_2}\\
		=&\sum_{\substack{k\in \supp(p)\\2k_1+k_2=d_v(p)}}\left(xz+y^2\right)^{k_1}z^{k_2}\\
		=& p^{v}(xz+y^2,z)\,,
	\end{align*}
	and it proves the assertion. 
\end{proof}

\begin{ex}\label{exww}
	{\em 
		If $\varphi(x,y,z)=p(xz+y^2,z)$, then it is not always true that $\overline{\varphi}$ coincides with $ \overline{p}(xz+y^2,z)$. Indeed, consider $p(t_1,t_2)=t_1^2-t_2^3+t_1t_2^2\in F[t_1,t_2]$, hence $\overline{p}=-t_2^3+t_1t_2^2$, and by an immediate computation one has $$\overline{\varphi}=\left(xz+y^2\right)^2+\left(xz+y^2\right)z^2\neq -z^3+\left(xz+y^2\right)z^2=\overline{p}(xz+y^2,z)\,.$$
		On the other hand, it is easy to verify that if $v=(2,1)$, then $d_v(p)=4$ and $p^v=t_1^2+t_1t_2^2$. Finally, by a direct computation one has
		$$
		\overline{\varphi}=p^v(xz+y^2,z)\,.
		$$.  
	}
\end{ex}

\begin{cor}\label{corwild}
	Let  $p\in F[t_1,t_2]$ and set $\varphi=p(xz+y^2,z)$ in $F[x,y,z]$. 
	If the partial derivative $(p^{v})_{t_1}$ is nonzero, where $v=(2,1)$, then its corresponding Nagata homomorphism $(f,g,h)$ is a wild automorphism. 
	
\end{cor}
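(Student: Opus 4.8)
The plan is to chain together the three preceding results. By Proposition~\ref{automp}, the hypothesis $\varphi=p(xz+y^2,z)$ already guarantees that the Nagata homomorphism $(f,g,h)$ is an automorphism of $F[x,y,z]$; so the entire point of the corollary is to show this automorphism is \emph{wild}, and for that I would appeal to Proposition~\ref{autwild}. That proposition reduces the task to checking that at least one of the partial derivatives $\overline{\varphi}_x$, $\overline{\varphi}_y$ is a nonzero element of $F[x,y,z]$.

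To identify $\overline{\varphi}$, I would invoke Lemma~\ref{lemwild} with the weight $v=(2,1)$, which gives $\overline{\varphi}=p^{v}(xz+y^2,z)$. Differentiating this expression by the chain rule and using $\partial z/\partial x=0$, one obtains
\[
\overline{\varphi}_x = z\cdot (p^{v})_{t_1}(xz+y^2,z)\,.
\]
So it remains to argue that $(p^{v})_{t_1}(xz+y^2,z)\neq 0$ in $F[x,y,z]$. This follows because $xz+y^2$ and $z$ are algebraically independent over $F$ — equivalently, the substitution homomorphism $F[t_1,t_2]\to F[x,y,z]$, $t_1\mapsto xz+y^2$, $t_2\mapsto z$, is injective (indeed, over the field $F(z)$ the element $xz+y^2$ is a degree-one, hence transcendental, polynomial in $x$). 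Since $(p^{v})_{t_1}\neq 0$ in $F[t_1,t_2]$ by hypothesis, its image under this injective map is nonzero, and multiplying by $z$ keeps it nonzero because $F[x,y,z]$ is a domain. Hence $\overline{\varphi}_x\neq 0$, and Proposition~\ref{autwild} yields that $(f,g,h)$ is wild.

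I do not expect a genuine obstacle here; the corollary is essentially bookkeeping on top of Propositions~\ref{automp} and~\ref{autwild}. The one point that requires care is precisely the phenomenon highlighted in Example~\ref{exww}: one must use the $v$-leading form $p^{v}$ with $v=(2,1)$, not the naive top total-degree form $\overline{p}$, when passing from $p$ to the leading form of $\varphi$ — otherwise the identity $\overline{\varphi}=p^{v}(xz+y^2,z)$ fails and the chain-rule computation above is invalid. Routing the argument through Lemma~\ref{lemwild} rather than a direct substitution handles this cleanly, and the hypothesis is stated in terms of $(p^{v})_{t_1}$ exactly so that the conclusion $\overline{\varphi}_x\neq 0$ comes out immediately.
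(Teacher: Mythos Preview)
Your proposal is correct and follows essentially the same route as the paper: invoke Proposition~\ref{automp} for the automorphism, Lemma~\ref{lemwild} for $\overline{\varphi}=p^{v}(xz+y^2,z)$, deduce $\overline{\varphi}_x\neq 0$ from $(p^{v})_{t_1}\neq 0$, and conclude via Proposition~\ref{autwild}. The only difference is that you spell out the chain-rule computation and the algebraic-independence justification that the paper leaves implicit (the paper also notes $\overline{\varphi}_y\neq 0$, but as you observe, one nonzero partial suffices for Proposition~\ref{autwild}).
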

\begin{proof}
	By Proposition \ref{automp}, $(f,g,h)$ is an automorphism 
	and Lemma \ref{lemwild} establishes the equality $\overline{\varphi}=p^{v}(xz+y^2,z)$. Hence the fact that $(p^v)_{t_1}\neq 0$ implies that $\overline{\varphi}_x\neq 0$ and $\overline{\varphi}_y\neq 0$. By Proposition  \ref{autwild}, the automorphism $(f,g,h)$ is wild.  
\end{proof}

\begin{ex}
	{\em 
		Let $\varphi=(xz+y^2)^n$ with $n\geq 1$. Corollary \ref{corwild} implies that 
		\begin{align*}
			f(x,y,z)&=x-2y (xz+y^2)^n-z(xz+y^2)^{2n},\\
			g(x,y,z)&=y+z(xz+y^2)^n,\\
			h(x,y,z)&=z,
		\end{align*}
		is a wild automorphism. 
	}
\end{ex}
\begin{ex}
	{\em 
		Let $\varphi(x,y,z)=p(xz+y^2,z)$, where $p(t_1,t_2)=t_1^2-t_2^3+t_1t_2^2\in F[t_1,t_2]$ as in Example \ref{exww}. Since $p^v=t_1^2+t_1t_2^2$ and $(p^v)_{t_1}=2t_1+t_1^2\neq0$, Corollary \ref{corwild} implies that corresponding Nagata homomorphism $(f,g,h)$ is a wild automorphism.
	}
\end{ex}

\noindent {\it \L{}ojasiewicz exponent.---}
Let $K$ be a normed field. For $z=(z_1,z_2,\dots,z_m)\in K^m$, we write $|z|=\max_{1\leq i\leq m}|z_i|$. If $F=(F_1,F_2,\dots,F_n)\colon K^m\to K^n$  is a polynomial map, then we set $\deg F=\max_{1\leq i\leq n}\deg F_i$. 
The {\em \L{}ojasiewicz exponent at infinity} of a polynomial map $F\colon K^m\to K^n$, denoted by $\mathcal{L}_{\infty}(F)$, is defined as the supremum of the real numbers $\alpha$ such that there exist two positive real numbers $C$ and $R$ so that 
$$|F(z)|\geq C|z|^{\alpha}$$
for all $|z|\geq R$ in $K^m$. One can follow the argument provided in \cite[(1.1)]{Plos85} to deduce that  
\begin{align}\label{Loj}
	\mathcal{L}_{\infty}(F)=\dfrac{1}{\deg (F^{-1})}
\end{align}
for every polynomial automorphism $F$.

For   $K=\mathbb{R}$ or $K=\mathbb{C}$, it is well known that  $\mathcal{L}_{\infty}(F)$ exists if and only if $F^{-1}(0)$ is compact, and in this case, $\mathcal{L}_{\infty}(F)$ is a rational number  \cite[Proposition 2.6]{Kra07}. The computation of this number is generally considered a highly nontrivial problem \cite{CKT, Len, Plos85,BiviaMZ, BH}.   

A {\em polynomial deformation} of a polynomial $p(t_1,\dots, t_m)\in K[t_1,\dots, t_m]$ is a polynomial   $H(t_1,\dots, t_m, s_1,\dots, s_n)\in K[t_1,\dots, t_m, s_1,\dots, s_n]$ such that $$H(t_1,\dots, t_m, 0,\dots, 0)=p(t_1,\dots, t_m)\,.$$
For simplicity, we write $s=(s_1,\dots, s_n)$, so that the polynomial ring $K[t_1,\dots, t_m, s_1,\dots, s_n]$ shall be shortly written as $K[t_1,\dots, t_m, s]$, and this allows to set $$p_s(t_1,\dots, t_m)=H(t_1,\dots, t_m, s_1,\dots, s_n)\,.$$    

The following is motivated by the lower semicontinuous property of the \L{}ojasiewicz exponent proven for finite holomorphic germs with constant Milnor number \cite[Theorem 2.1]{Plos10}. However, for polynomial maps, we expect the upper semicontinuous property under certain conditions, in view of the fact that Nagata homomorphisms have a constant Milnor number, see Lemma \ref{lemmilnor}.
\begin{cor}
	Let $p_s(t_1,t_2)\in K[t_1,t_2, s]$ be a polynomial deformation of a polynomial $p(t_1,t_2)\in K[t_1,t_2]$. Let $\varphi=p(xz+y^2,z)$, $\varphi_s=p_s(xz+y^2,z)$, and suppose that $(f,g,h)$ and $(f_s,g_s,h_s)$ are their corresponding Nagata automorphism.
	Then 
	$$\mathcal{L}_{\infty}(f_s,g_s, h_s)\leq \mathcal{L}_{\infty}(f,g, h)$$
\end{cor}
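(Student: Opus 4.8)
The plan is to reduce the inequality to the degree identity \eqref{Loj}, which computes $\mathcal{L}_{\infty}$ of a polynomial automorphism from the degree of its inverse, and then to translate everything into the weighted degree of the two-variable polynomial $p$. Since $\varphi=p(xz+y^{2},z)$ and $\varphi_s=p_s(xz+y^{2},z)$ have the form covered by Proposition \ref{automp}, both $(f,g,h)$ and $(f_s,g_s,h_s)$ are indeed automorphisms and their inverses are given explicitly by \eqref{pinverse}, so $\mathcal{L}_{\infty}$ is well defined for both.

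First I would compute the degree of the inverse. The inverse of $(f,g,h)$ is the triple $f'=x+2y\varphi-z\varphi^{2}$, $g'=y-z\varphi$, $h'=z$ of \eqref{pinverse}. If $\varphi$ is nonconstant, then $\deg(z\varphi^{2})=1+2\deg\varphi>1+\deg\varphi=\deg(2y\varphi)>1=\deg x$, so the leading form of $f'$ is $-z\,\overline{\varphi}^{\,2}$, which is nonzero because $F[x,y,z]$ is an integral domain; hence $\deg f'=1+2\deg\varphi$, while $\deg g'=1+\deg\varphi$ and $\deg h'=1$, giving $\deg(f,g,h)^{-1}=1+2\deg\varphi$ and likewise $\deg(f_s,g_s,h_s)^{-1}=1+2\deg\varphi_s$. (The cases $\varphi$ constant and $\varphi=0$ give a linear map, respectively the identity, for which $\mathcal{L}_{\infty}=1$, and the inequality holds trivially there.) Next, Lemma \ref{lemwild} identifies $\deg\varphi=d_v(p)$ and $\deg\varphi_s=d_v(p_s)$ for $v=(2,1)$, so by \eqref{Loj}
\[
\mathcal{L}_{\infty}(f,g,h)=\frac{1}{1+2\,d_v(p)},\qquad \mathcal{L}_{\infty}(f_s,g_s,h_s)=\frac{1}{1+2\,d_v(p_s)}.
\]
Thus the assertion becomes equivalent to the purely combinatorial inequality $d_v(p_s)\geq d_v(p)$.

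To prove $d_v(p_s)\geq d_v(p)$, write the deformation as $H=\sum_{(a,b)}c_{a,b}(s)\,t_1^{a}t_2^{b}$ with $c_{a,b}\in K[s]$, so that $c_{a,b}(0)$ is the coefficient of $t_1^{a}t_2^{b}$ in $p=p_0$. Choosing $(a_0,b_0)\in\supp(p)$ with $2a_0+b_0=d_v(p)$, we have $c_{a_0,b_0}(0)\neq0$, so $c_{a_0,b_0}$ is a nonzero polynomial in $s$; therefore $(a_0,b_0)\in\supp(p_s)$, and consequently $d_v(p_s)\geq 2a_0+b_0=d_v(p)$, for every $s$ outside the proper Zariski-closed set $\{c_{a_0,b_0}=0\}$ — equivalently, when $p_s$ is regarded as a polynomial over the rational function field $K(s)$. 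Taking reciprocals in $1+2\,d_v(p_s)\geq 1+2\,d_v(p)$ yields the stated inequality.

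I expect the point needing the most attention to be this last step: the inequality $d_v(p_s)\geq d_v(p)$ can genuinely fail at exceptional parameter values where the entire $v$-leading part of $p$ cancels inside $p_s$ (for instance $H=(1-s)t_1$, whose fibre over $s=1$ vanishes identically), so the conclusion should be understood as holding for $s$ in a dense Zariski-open subset — precisely the upper semicontinuity "under certain conditions" that the preceding paragraph anticipates. A secondary, more routine point is the no-cancellation claim for the leading form of $f'$ in the degree computation, which is where the domain property of $F[x,y,z]$ is used.
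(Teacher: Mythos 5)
Your proposal is correct and follows essentially the same route as the paper: reduce via the identity \eqref{Loj} to the degree of the inverse in \eqref{pinverse}, identify $\deg\varphi=d_v(p)$ and $\deg\varphi_s=d_v(p_s)$ through Lemma \ref{lemwild}, and conclude from $\supp(p)\subseteq\supp(p_s)$ that $d_v(p)\leq d_v(p_s)$. You additionally make explicit two points the paper leaves implicit — the no-cancellation check giving $\deg\bigl((f,g,h)^{-1}\bigr)=2\deg\varphi+1$, and the fact that the support inclusion (hence the inequality) is to be read with $s$ treated formally, i.e.\ generically in the parameter, since it can fail at exceptional specializations — which is a fair reading of the paper's phrase ``with respect to the variables $t_1$ and $t_2$''.
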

\begin{proof}
	Since $\supp(p)\subseteq \supp(p_s)$ with respect to the variables $t_1$ and $t_2$, we have $\deg(p)\leq \deg(p_s)$, and for $v=(2,1)$ one has 
	$$\deg \varphi=d_v(p)\leq d_v(p_s)=\deg \varphi_s\,.$$
	Hence, in view of \eqref{Loj} we have 
	\begin{align*}
		\mathcal{L}_{\infty}(f_s,g_s, h_s)&=\dfrac{1}{\deg ((f_s,g_s, h_s)^{-1})}\\&
		=\dfrac{1}{2\deg \varphi_s+1 }\\&\leq 
		\dfrac{1}{2\deg \varphi+1 }\\&=\dfrac{1}{\deg ((f,g, h)^{-1})}\\&=\mathcal{L}_{\infty}(f,g, h)\,.
	\end{align*}
\end{proof}

\section{Statement of the main results }
The main results in this work are the following:
\begin{thm} \label{Teorpr}
	The general solution in $F[x,y,z]$ of the linear partial differential equation: 
	\begin{equation}\label{eq1}
		-2y\varphi_x+z\varphi_y=0
	\end{equation}
	has the form $
	\varphi=p(xz+y^2,z)$,
	where $p\in F[t_1,t_2]$.
\end{thm}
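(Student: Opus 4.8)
\emph{Proof proposal.} Write $D:=-2y\,\partial_x+z\,\partial_y$ for the $F$-derivation of $F[x,y,z]$ occurring on the left-hand side of \eqref{eq1}, and set $u:=xz+y^2$. The inclusion $F[xz+y^2,z]\subseteq\ker D$ I would dispose of first and cheaply: since $D(u)=-2yz+2yz=0$ and $D(z)=0$, the derivation $D$ annihilates the whole subring $F[u,z]=F[xz+y^2,z]$, so every $\varphi=p(xz+y^2,z)$ solves \eqref{eq1}. The content of the theorem is the opposite inclusion $\ker D\subseteq F[u,z]$, and my plan is to obtain it in three moves: localize at $z$, diagonalize $D$ via the substitution $x=(u-y^2)/z$, and then show that the negative powers of $z$ which a priori appear are in fact spurious.

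\textbf{Localization and diagonalization.} Let $R:=F[x,y,z][z^{-1}]$. As $F[x,y,z]$ is a domain, $z$ is a non-zero-divisor, the map $F[x,y,z]\hookrightarrow R$ is injective, and $D$ extends uniquely to a derivation of $R$ (one has $D(z^{-1})=-z^{-2}D(z)=0$); hence it suffices to show that any $\varphi\in F[x,y,z]$ with $D\varphi=0$ in $R$ lies in $F[u,z]$. In $R$ we can solve $x=(u-y^2)z^{-1}$, so $R=F[u,y,z,z^{-1}]$, and $u,y,z$ are algebraically independent over $F$: if $P=\sum_{i\le d}Q_i(t_2,t_3)t_1^i$ with $Q_d\neq0$ vanished at $(u,y,z)$, then $P(xz+y^2,y,z)$ would have $x$-degree $d$ with leading coefficient $Q_d(y,z)z^d\neq0$, a contradiction. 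Viewing $R=F[u,z,z^{-1}][y]$ with $y$ transcendental over $F[u,z,z^{-1}]$, I would then compute $D(u)=0$, $D(z)=0$, $D(y)=z$; since two $F$-derivations of $R$ agreeing on the algebra generators $u,y,z$ agree everywhere, this means $D=z\,\partial_y$ on $R$. Because $R$ is a domain and $z$ is a unit, $\ker(D|_R)=\ker\partial_y=F[u,z,z^{-1}]$. So every $\varphi\in\ker D$ admits a unique expression as a finite sum $\varphi=\sum_{j=j_0}^{j_1}c_j(u)z^j$ with $c_j\in F[u]$, and $c_{j_0}\neq0$ when $\varphi\neq0$.

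\textbf{Clearing the denominators (the crux).} The step I expect to be the real obstacle is ruling out $j_0<0$: the substitution $(x,y,z)\mapsto(xz+y^2,y,z)$ is not a polynomial automorphism, so negative powers of $z$ are not excluded for free. Suppose $j_0<0$ and set $N:=-j_0\ge1$. Then $z^N\varphi=c_{j_0}(u)+z\beta$, where $\beta:=\sum_{j>j_0}c_j(u)z^{j+N-1}\in F[u,z]\subseteq F[x,y,z]$ (each exponent $j+N-1$ with $j>j_0$ is $\ge0$). But $z^N\varphi\in F[x,y,z]$ is divisible by $z$, so reducing the identity $z^N\varphi=c_{j_0}(u)+z\beta$ modulo $z$, i.e. in $F[x,y,z]/(z)\cong F[x,y]$ where $u\mapsto y^2$, yields $c_{j_0}(y^2)=0$; since $y$ is transcendental over $F$ this forces $c_{j_0}=0$, contradicting the choice of $j_0$. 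Hence $j_0\ge0$, so $\varphi=\sum_{j\ge0}c_j(u)z^j\in F[u,z]$, that is $\varphi=p(xz+y^2,z)$ with $p(t_1,t_2):=\sum_j c_j(t_1)t_2^j$, which completes the argument.

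\textbf{An alternative route.} If one prefers to avoid localization, I would instead expand $\varphi=\sum_k a_k(x,z)y^k$ and compare coefficients of $y^k$ in \eqref{eq1} to obtain the recursion $z(k+1)a_{k+1}=2(a_{k-1})_x$ for all $k\ge0$ (with $a_{-1}=0$). This immediately gives $a_1=0$, hence $a_k=0$ for every odd $k$, together with $a_{2m}=\partial_x^m a_0/(m!\,z^m)$ for all $m$; polynomiality of $\varphi$ then forces $z^m\mid\partial_x^m a_0$ for every $m$, which in characteristic $0$ is equivalent to $a_0\in F[xz,z]$, and resumming the series recovers $\varphi=p(xz+y^2,z)$. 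Either way the heart of the matter is the same: one must show that the $z$-denominators introduced by the non-invertible substitution $x=(u-y^2)/z$ cancel, and the mod-$z$ reduction above is the most economical way I see to do that.
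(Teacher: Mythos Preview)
Your proof is correct and takes a genuinely different route from the paper. The paper first reduces to homogeneous $\varphi$, expands each homogeneous component as a polynomial in $z$ with coefficients $\phi^k(x,y)$, derives the two-term recursion $2y\,\phi^{n-k-1}_x=\phi^{n-k}_y$, and then carries out an explicit induction with binomial coefficients to show that a homogeneous solution of even degree $2n$ has the form $a(xz+y^2)^n+z\psi$ while one of odd degree has the form $z\psi$, with $\psi$ again a solution of lower degree; the theorem then follows by induction on the degree. Your main argument instead localizes at $z$ and uses the substitution $x=(u-y^2)/z$ to identify $D$ with $z\,\partial_y$ on $F[u,y,z,z^{-1}]$, whence $\ker D=F[u,z,z^{-1}]$ in one stroke; the only remaining work, ruling out negative powers of $z$, is a short reduction modulo $z$. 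Your alternative route (expanding in powers of $y$ rather than $z$) is closer in spirit to the paper's computation but yields a much cleaner recursion: the odd coefficients vanish immediately and the even ones are given by the closed formula $a_{2m}=\partial_x^{\,m}a_0/(m!\,z^m)$, which bypasses the parity casework and binomial bookkeeping of the paper's Proposition. Both of your approaches are shorter and more structural; the paper's argument has the modest advantage of never leaving $F[x,y,z]$, but at the cost of substantially more computation.
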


\begin{cor}\label{mainth}
	Let  $\varphi\in F[x,y,z]$ and let $(f, g, h)$ be its corresponding Nagata homomorphism as in \eqref{eqnag}. 
	Then, the determinant of the Jacobian matrix $J(f,g,h)$ is a nonzero constant in $F$ if and only if $(f,g,h)$ is an automorphism of $F[x,y,z]$.
	In this case, there exists a polynomial $p\in F[t_1,t_2]$ such that $\varphi=p(xz+y^2,z)$, moreover the inverse of $(f,g,h)$ is given in \eqref{pinverse}.
\end{cor}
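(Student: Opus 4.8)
The plan is to assemble Corollary \ref{mainth} from the pieces already in place, treating the two implications separately and then invoking Theorem \ref{Teorpr} to extract the form of $\varphi$. For the easy direction, suppose $(f,g,h)$ is an automorphism. Then its Jacobian determinant is a unit in $F[x,y,z]$, hence a nonzero constant; this is the standard fact that an automorphism has invertible Jacobian, so nothing beyond a one-line remark is needed here.

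For the substantive direction, assume $\det J(f,g,h)=c\in F\setminus\{0\}$. By Proposition \ref{detedp} this forces $\varphi$ to satisfy the linear PDE $-2y\varphi_x+z\varphi_y=0$. Now apply Theorem \ref{Teorpr}: the general polynomial solution of this equation has the form $\varphi=p(xz+y^2,z)$ for some $p\in F[t_1,t_2]$. With $\varphi$ of this special shape, Proposition \ref{automp} applies verbatim and tells us that $(f,g,h)$ is an automorphism, with inverse given explicitly by \eqref{pinverse}. This closes the equivalence and simultaneously delivers the two supplementary claims (the form of $\varphi$ and the formula for the inverse).

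I would present the argument as a short chain: (1) automorphism $\Rightarrow$ Jacobian a nonzero constant (general principle); (2) Jacobian a nonzero constant $\Rightarrow$ PDE holds (Proposition \ref{detedp}); (3) PDE holds $\Rightarrow$ $\varphi=p(xz+y^2,z)$ (Theorem \ref{Teorpr}); (4) $\varphi$ of that form $\Rightarrow$ automorphism with inverse \eqref{pinverse} (Proposition \ref{automp}). Steps (2)--(4) are all cited results, so the only genuine content of the corollary beyond bookkeeping is that the cycle closes up, i.e.\ that the class of $\varphi$ yielding a constant nonzero Jacobian coincides exactly with the class $\{p(xz+y^2,z)\}$ yielding automorphisms.

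The main obstacle, in truth, is not in this corollary at all but is deferred to Theorem \ref{Teorpr}, whose proof must actually solve the first-order linear PDE $-2y\varphi_x+z\varphi_y=0$ over the polynomial ring and show that $xz+y^2$ and $z$ generate all polynomial first integrals. Within the corollary itself, the only point requiring a word of care is making sure the logical loop is genuinely an equivalence: one should note that Proposition \ref{detedp} is an "if and only if," so a nonzero \emph{constant} Jacobian and the PDE are interchangeable hypotheses, and that Proposition \ref{automp} produces a two-sided inverse, so "automorphism" is justified and not merely "injective." Given those remarks, the proof is essentially a two-sentence citation chain.
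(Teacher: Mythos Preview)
Your proposal is correct and follows exactly the same route as the paper: the ``easy'' direction is dismissed as a general fact, and the converse is obtained by chaining Proposition~\ref{detedp}, Theorem~\ref{Teorpr}, and Proposition~\ref{automp} in precisely that order. Your additional remarks about the iff in Proposition~\ref{detedp} and the two-sidedness of the inverse in Proposition~\ref{automp} are sound but not strictly needed, since the paper's own proof is content with the one-directional chain.
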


The proofs of these results are given in Section \ref{sectproofs}.  First of all, we shall provide some preliminary results in the next paragraphs.

\begin{lem} \label{lemahom}
	Let $\varphi=\varphi^0+\varphi^1+\cdots+\varphi^n$ be a solution of the linear partial differential equation: 
	\begin{equation}\label{eq111}
		A(x,y,z)\varphi_x+B(x,y,z)\varphi_y+C(x,y,z)\varphi_z=0\,,
	\end{equation}
	where each $\varphi^{k}\in F[x,y, z]$ is the homogeneous component of $\varphi$ of degree $k$ for $0\leq k\leq n$, and $A,B,C$ are homogeneous polynomials of the same degree. Then each homogeneous component $\varphi^k$ is also a solution of the equation \eqref{eq111}.
\end{lem}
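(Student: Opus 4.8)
The plan is to exploit the fact that the differential operator $L = A\,\partial_x + B\,\partial_y + C\,\partial_z$ is homogeneity-graded: since $A,B,C$ are all homogeneous of a common degree, say $d$, the operator $L$ raises the degree of a homogeneous polynomial by exactly $d-1$. Concretely, if $\psi$ is homogeneous of degree $k$, then each partial derivative $\psi_x,\psi_y,\psi_z$ is homogeneous of degree $k-1$ (or zero), so $A\psi_x$, $B\psi_y$, $C\psi_z$ are each homogeneous of degree $k-1+d$ (or zero), and hence $L\psi$ is homogeneous of degree $k+d-1$.

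First I would write $\varphi = \varphi^0 + \varphi^1 + \cdots + \varphi^n$ with $\varphi^k$ homogeneous of degree $k$, and apply $L$ termwise by linearity to get $L\varphi = \sum_{k=0}^n L\varphi^k$. By the observation above, $L\varphi^k$ is homogeneous of degree $k+d-1$ for each $k$ (allowing the zero polynomial, which we may regard as homogeneous of any degree). Since the degrees $k+d-1$ for $k=0,1,\dots,n$ are pairwise distinct, the terms $L\varphi^k$ live in distinct graded pieces of $F[x,y,z]$, so there can be no cancellation among them. Therefore the equation $L\varphi = 0$ forces each graded component to vanish: $L\varphi^k = 0$ for every $k$, which is exactly the claim that each $\varphi^k$ solves \eqref{eq111}.

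There is essentially no obstacle here; the only point requiring a word of care is the bookkeeping when some $\varphi^k$ or some partial derivative is the zero polynomial, and the degenerate case $d=0$ (where $A,B,C$ are constants and $L$ preserves degree $k$ by sending it to degree $k-1$, for which the same distinctness-of-degrees argument applies verbatim). One could phrase the whole argument compactly by invoking the direct-sum decomposition $F[x,y,z] = \bigoplus_{k\ge 0} F[x,y,z]_k$ into homogeneous components and noting that $L$ maps $F[x,y,z]_k$ into $F[x,y,z]_{k+d-1}$, so $L$ respects the grading up to the fixed shift $d-1$; then $L\varphi = 0$ read off in each graded degree gives $L\varphi^k = 0$.
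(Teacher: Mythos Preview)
Your proposal is correct and follows essentially the same approach as the paper: substitute the homogeneous decomposition into the PDE, observe that each term $A\varphi^k_x+B\varphi^k_y+C\varphi^k_z$ is homogeneous of a distinct degree, and conclude that each must vanish separately. Your version is slightly more explicit about the degree shift $k\mapsto k+d-1$ and the direct-sum grading, but the argument is the same.
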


\begin{proof}
	Replacing $\varphi=\sum_{k=0}^{n}\varphi^k$ into the linear PDE (\ref{eq1}), we obtain:
	\begin{equation*}
		\sum_{k=0}^n\Big(A(x,y,z)\varphi^k_x+B(x,y,z)\varphi^k_y+C(x,y,x)\varphi^k_z\Big)=0\,,
	\end{equation*}
	which is a sum of homogeneous polynomials of different degrees. Therefore, 
	\begin{equation*}
		A(x,y,z)\varphi^k_x+B(x,y,z)\varphi^k_y+C(x,y,x)\varphi^k_z=0 \,\,\, \textnormal{for all}\,\, k=0,1,\dots, n\,, 
	\end{equation*}
	as wanted. 
\end{proof}	

By the previous lemma, to find the general solution of the equation (\ref{eq1}), it is enough to deal with its homogeneous polynomials components. 

Let  $\varphi\in F[x,y,z]$ be a homogeneous polynomial of degree $n$, if we consider the canonical isomorphism of rings $F[x,y,z]\cong F[x,y][z]$, then we can suppose  that $\varphi$ has the form: 
\begin{equation}\label{dpch}
	\varphi=\phi^{n}+\phi^{n-1}z+\phi^{n-2}z^2+\dots +\phi^{1}z^{n-1}+\phi^0z^{n}=\sum_{k=0}^{n-1}\phi^{n-k}z^k+\phi^0z^{n}\,,
\end{equation}
with $\phi^{k}\in F[x,y]$ homogeneous polynomials of degree $k$ for $k=0,1,\dots,n$.

\begin{lem}\label{psedp}
	Let $\varphi$ be a homogeneous polynomial in $F[x,y,z]$ of degree $n$ that is a solution of the linear PDE \eqref{eq1}. Then, $\phi^{n}=ay^{n}$ and $\phi^{1}=bx$, where $a$ and $b$ are constants in $F$, and moreover
	$$2y\phi^{n-k-1}_x=\phi^{n-k}_y$$
	for all $k=0,1,\dots, n-2$.
\end{lem}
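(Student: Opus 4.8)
The plan is to substitute the $z$-adic expansion \eqref{dpch} of $\varphi$ directly into the linear PDE \eqref{eq1} and then compare coefficients of powers of $z$. Write $\varphi=\sum_{j=0}^{n}\phi^{\,n-j}z^{j}$, where each $\phi^{\,n-j}\in F[x,y]$ is homogeneous of degree $n-j$ (so $\phi^{0}$ is a constant). Since the $\phi^{\,n-j}$ are free of $z$, differentiation in $x$ and in $y$ commutes with this $z$-grading, so $\varphi_x=\sum_{j=0}^{n}\phi^{\,n-j}_x z^{j}$ and $\varphi_y=\sum_{j=0}^{n}\phi^{\,n-j}_y z^{j}$. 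Hence $-2y\varphi_x+z\varphi_y=-2y\sum_{j=0}^{n}\phi^{\,n-j}_x z^{j}+\sum_{j=0}^{n}\phi^{\,n-j}_y z^{j+1}$; after reindexing the second sum and collecting terms, the coefficient of $z^{0}$ is $-2y\phi^{\,n}_x$, the coefficient of $z^{j}$ for $1\le j\le n$ is $-2y\phi^{\,n-j}_x+\phi^{\,n+1-j}_y$, and the coefficient of $z^{n+1}$ is $\phi^{0}_y$, which already vanishes because $\phi^{0}\in F$.

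Equating every coefficient to zero then yields the claims. The $z^{0}$-coefficient gives $y\phi^{\,n}_x=0$, hence $\phi^{\,n}_x=0$; since $\phi^{\,n}$ is homogeneous of degree $n$ in $F[x,y]$ and independent of $x$, it must equal $a\,y^{n}$ for some $a\in F$. The $z^{j}$-coefficients for $1\le j\le n$ give $2y\phi^{\,n-j}_x=\phi^{\,n+1-j}_y$; relabelling $k=j-1$ turns these into $2y\phi^{\,n-k-1}_x=\phi^{\,n-k}_y$ for $0\le k\le n-1$, which in particular contains the asserted range $0\le k\le n-2$. The one remaining instance, $k=n-1$, reads $2y\phi^{0}_x=\phi^{1}_y$; its left-hand side is $0$ because $\phi^{0}$ is constant, so $\phi^{1}_y=0$, and homogeneity of degree $1$ forces $\phi^{1}=b\,x$ for some $b\in F$.

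I do not expect a real obstacle: the argument is essentially bookkeeping built on two elementary facts, namely that $\partial_x,\partial_y$ respect the expansion in powers of $z$, and that a homogeneous polynomial in two variables killed by one of the two partials is a monomial in the other variable. The only steps that need a little care are the reindexing of the two sums (so that the boundary terms $j=0$, $j=n$, and the spurious $z^{n+1}$-term are handled consistently) and the degenerate case $n=1$, in which the displayed family of relations is vacuous and the two conclusions $\phi^{\,n}=ay^{n}$, $\phi^{1}=bx$ together simply force $\phi^{1}=0$.
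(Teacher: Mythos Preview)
Your proof is correct and follows essentially the same approach as the paper: substitute the $z$-adic expansion into the PDE, reindex, and read off the coefficients of each power of $z$. The only cosmetic difference is that the paper drops the $\phi^{0}$ terms from the sums at the outset and then reads $\phi^{1}_{y}=0$ directly as the $z^{n}$-coefficient, whereas you keep $\phi^{0}$ in the sums and recover $\phi^{1}_{y}=0$ from the $k=n-1$ instance of the recurrence; your extra remark on the degenerate case $n=1$ is a welcome addition.
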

\begin{proof}
	Let us derivate $\varphi$ with respect to $x$ and  $y$:
	\begin{align*}
		\varphi_x&=\sum_{k=0}^{n-1}\phi^{n-k}_xz^k=\phi_x^{n}+\sum_{k=1}^{n-1}\phi^{n-k}_xz^k\,,\\
		\varphi_y&=\sum_{k=0}^{n-1}\phi^{n-k}_yz^k=\sum_{k=0}^{n-2}\phi^{n-k}_yz^k+\phi^1_y z^{n-1}\,.
	\end{align*}
	Substituting these equalities in the equation (\ref{eq1}), we have
	$$
	-2y\phi_x^{n}+\sum_{k=1}^{n-1}-2y\phi^{n-k}_xz^k+\sum_{k=0}^{n-2}\phi^{n-k}_yz^{k+1}+\phi^1_y z^{n}=0\,,$$
	and by index rescaling, we get
	$$
	-2y\phi_x^{n}+\sum_{k=0}^{n-2}-2y\phi^{n-k-1}_xz^{k+1}+\sum_{k=0}^{n-2}\phi^{n-k}_yz^{k+1}+\phi^1_y z^{n}=0\,.$$
	Hence,
	\begin{align*}
		-2y\phi_x^{n}+\sum_{k=0}^{n-2}\left(-2y\phi^{n-k-1}_x+\phi^{n-k}_y\right)z^{k+1}+\phi^1_y z^{n}=0\,.\label{eqp5}
	\end{align*}
	This equation gives the equalities $\phi^{n}_x=0$ and $\phi^{1}_y=0$, so $\phi^{n}=ay^{n}$ and $\phi^{1}=bx$ for some constants $a,b\in F$.
	Moreover,
	\begin{equation}
		2y\phi^{n-k-1}_x=\phi^{n-k}_y\,\,\, \textnormal{para todo}\,\,\, k=0,1,\dots, n-2\,.\label{eq5}
	\end{equation}
\end{proof}

The following result, will enable us to know  how a solution can be written of the equation (\ref{eq1}).
\begin{prop}\label{pr1}
	Let $\varphi\in F[x,y,z]$ be a homogeneous solution of the linear PDE \eqref{eq1}. 
	We have the following assertions:
	\begin{enumerate}
		\item[ ($a$)] If $\varphi$ has degree $2n$, then it has the form: 
		\begin{equation*}
			\varphi=a(xz+y^2)^n+z\psi\,,
		\end{equation*}
		where $a\in F$ and $\psi\in F[x,y,z]$ is a homogeneous polynomial which is also a solution of the equation \eqref{eq1} of degree $2n-1$.
		\item[($b$)] If $\varphi$ has degree $2n+1$,  then it has the form: 
		\begin{equation*}
			\varphi=z\psi\,,
		\end{equation*}
		where $\psi\in F[x,y,z]$ is homogeneous of degree $2n$ which is also a solution of \eqref{eq1} of degree $2n$.
	\end{enumerate}
	
\end{prop}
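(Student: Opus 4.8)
The plan is to derive both parts from Lemma~\ref{psedp} together with two elementary closure properties of equation~\eqref{eq1}. First, for every $m\ge 1$ the polynomial $(xz+y^2)^m$ is a solution: one computes $-2y\bigl((xz+y^2)^m\bigr)_x+z\bigl((xz+y^2)^m\bigr)_y=m(xz+y^2)^{m-1}\bigl(-2yz+2yz\bigr)=0$ (this also follows from Propositions~\ref{detedp} and \ref{automp} with $p=t_1^m$). Second, a polynomial $z\psi$ solves \eqref{eq1} if and only if $\psi$ does, since $-2y(z\psi)_x+z(z\psi)_y=z\bigl(-2y\psi_x+z\psi_y\bigr)$ and $F[x,y,z]$ is a domain.

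For part~($a$) I would write $\varphi=\sum_{k=0}^{2n}\phi^{2n-k}z^k$ as in \eqref{dpch}. By Lemma~\ref{psedp} the $z^0$-coefficient is $\phi^{2n}=ay^{2n}$ for some $a\in F$, while $(xz+y^2)^n$ has $z^0$-coefficient $(y^2)^n=y^{2n}$. Hence $\varphi-a(xz+y^2)^n$ is homogeneous of degree $2n$ with vanishing $z^0$-coefficient, so it equals $z\psi$ for a homogeneous $\psi$ of degree $2n-1$, and by the two closure properties $\psi$ is again a solution. This is exactly the assertion.

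For part~($b$) with $N=2n+1$, it suffices by the second closure property to show that the $z^0$-coefficient $\phi^N$ of $\varphi$ vanishes. The case $n=0$ is immediate: Lemma~\ref{psedp} then forces $\phi^1$ to be simultaneously a multiple of $y$ and a multiple of $x$. For $n\ge 1$ I would expand each $F[x,y]$-component as $\phi^j=\sum_i c_{i,\,j-i}\,x^iy^{\,j-i}$, so that $c_{i,j}$ is the coefficient of $x^iy^j$ inside $\phi^{i+j}$, with the convention $c_{i,j}:=0$ when $j<0$. Lemma~\ref{psedp} gives $\phi^N=ay^N$, that is $c_{0,N}=a$, together with the relations $2y\,\phi^{j-1}_x=\phi^j_y$ for $2\le j\le N$; comparing the coefficients of $x^ry^{\,j-r-1}$ in these identities yields
\[
(j-r)\,c_{r,\,j-r}=2(r+1)\,c_{r+1,\,j-r-2}.
\]
Specializing $j=N-r$ produces a chain
\[
(N-2r)\,c_{r,\,N-2r}=2(r+1)\,c_{r+1,\,N-2(r+1)}\qquad(0\le r\le n),
\]
connecting the coefficients of the monomials $x^ry^{\,N-2r}$, all of which share the weighted degree $2i+j=N$ with respect to $v=(2,1)$. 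Since $N$ is odd, $N-2r\neq 0$ for every $r$, so each link is an equivalence; at $r=n$ the right-hand side equals $2(n+1)\,c_{n+1,-1}=0$, forcing $c_{n,1}=0$, and running the chain back down gives $a=c_{0,N}=0$. Then $\varphi=z\psi$ with $\psi$ homogeneous of degree $2n$, and $\psi$ solves \eqref{eq1}.

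I expect the only non-routine point to be the chain argument in part~($b$), and its mechanism is exactly the parity hypothesis: in even degree $N$ the factor $N-2r$ would vanish at $r=N/2$, the chain would terminate at $c_{N/2,0}$ --- the coefficient of a pure power of $x$, on which no relation is imposed --- and that free parameter is the one that becomes the coefficient $a$ of the extra term $a(xz+y^2)^{N/2}$ produced in part~($a$). So the crux is to see that the single constraint furnished by Lemma~\ref{psedp} (namely that $\phi^N$ is a pure power of $y$) propagates along these weighted diagonals and, in odd degree, necessarily ends at a forced zero; everything else is bookkeeping.
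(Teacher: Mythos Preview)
Your argument is correct, and it is considerably more economical than the paper's. In part~($a$) the paper proceeds by an explicit induction on $k$ to determine the form of every component $\phi^{2n-k}$ for $1\le k\le n$, verifying that each contains the term $a\binom{n}{k}y^{2n-2k}x^k$ coming from the binomial expansion of $(xz+y^2)^n$, and only then regroups to extract $a(xz+y^2)^n+z\psi$. You bypass all of this by subtracting the known solution $a(xz+y^2)^n$ at the outset and invoking the two closure properties; this is a genuine simplification. In part~($b$) the paper likewise carries along full expressions for the $\phi^{2n-k}$, tracking a leading coefficient $P^{2n+1}_k a$ through the recursion until, at the step $2y\phi^n_x=\phi^{n+1}_y$, the appearance of a pure $x^n$ term on the right but not on the left forces $a=0$. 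Your chain argument isolates exactly the coefficients $c_{r,N-2r}$ that matter and makes transparent why parity is decisive: when $N$ is odd no link $(N-2r)$ vanishes, so the chain propagates from $c_{n+1,-1}=0$ all the way back to $c_{0,N}=a$. The two proofs are equivalent in substance, but yours strips away the bookkeeping that the paper's inductive formulas require.
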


\begin{proof}
	($a$). Let us suppose that:
	\begin{equation}\label{eqh}
		\varphi=\phi^{2n}+\phi^{2n-1}z+\cdots+\phi^{1}z^{2n-1}+\phi^0z^{2n}\,,
	\end{equation}
	where $\phi^{2n-k}\in F[x,y]$ is a homogeneous polynomial of degree $2n-k$ for  $0\leq k\leq 2n$. By Lemma \ref{psedp}, we get the following equalities: 
	\begin{align}	
		\phi^{2n}&=ay^{2n}\label{eq7}\,\,\,\, \textnormal{and}\,\,\,\, \phi^{1}=bx\\
		2y\phi^{2n-k-1}_x&=\phi^{2n-k}_y\,\,\, \textnormal{para todo}\,\,\, k=0,1,\dots, 2n-2\,.\label{eq8}
	\end{align}
	We claim that there are constants $a, a^{2n-k}_{k-1-j}$, with  $1\leq k\leq n$ y $0\leq j\leq k-1$, such that:
	\begin{align}
		\phi^{2n-k}&=a\binom{n}{k}y^{2n-2k}x^k+\sum_{j=0}^{k-1} a^{2n-k}_{k-1-j}y^{2n-2k+1+j}x^{k-1-j}\label{eq12}
	\end{align}
	Indeed, taking $k=0$, from the equalities (\ref{eq7}) and  (\ref{eq8}), we find that:
	\begin{equation*}
		2y\phi^{2n-1}_x=\phi^{2n}_y=2nay^{2n-1}\,,
	\end{equation*}
	so that $\phi_x^{2n-1}=nay^{2n-2}$. Hence, integrating with respect to $x$, we obtain
	$$\phi^{2n-1}=nay^{2n-2}x+a^{2n-1}_0y^{2n-1}\,,$$ which can be written as 
	\begin{align*}
		\phi^{2n-1}=a\binom{n}{1}y^{2n-2}x+a^{2n-1}_{0}y^{2n-1}\,,
	\end{align*}
	thus \eqref{eq12} is true for $k=1$. Now, suppose that we have \eqref{eq12}. 
	Taking the derivative with respect to $y$, we have
	$$\phi^{2n-k}_y=2a\binom{n}{k}(n-k)y^{2n-2k-1}x^k+\sum_{j=0}^{k-1}\, (2n-2k+1+j)a^{2n-k}_{k-1-j}y^{2n-2k+j}x^{k-1-j}$$
	By the equality (\ref{eq8}), we get 
	$$\phi^{2n-k-1}_x=a\binom{n}{k}(n-k)y^{2n-2k-2}x^k+\sum_{j=0}^{k-1} \frac{(2n-2k+1+j)a^{2n-k}_{k-1-j}}{2}y^{2n-2k-1+j}x^{k-1-j}\,.$$
	Integrating with respect to $x$, there exists a constant $a^{2n-k-1}_{0}\in F$ such that
	$$\phi^{2n-k-1}=a\binom{n}{k+1}y^{2n-2k-2}x^{k+1}+\sum_{j=0}^{k-1} a^{2n-k-1}_{k-j}y^{2n-2k-1+j}x^{k-j}+a^{2n-k-1}_{0}y^{2n-k-1}\,,$$
	where $\displaystyle a^{2n-k-1}_{k-j}= \frac{(2n-2k+1+j)a^{2n-k}_{k-1-j}}{2(k-j)}$ for all $0\leq j\leq k-1$. Thus, we obtain
	$$\phi^{2n-k-1}=a\binom{n}{k+1}y^{2n-2k-2}x^{k+1}+\sum_{j=0}^{k}a^{2n-k-1}_{k-j}y^{2n-2k-1+j}x^{k-j}\,,$$
	as wished, and this completes the recurrence.
	
	Now, replacing the expressions (\ref{eq7}) and (\ref{eq12}) in the equation (\ref{eqh}), we have  
	\begin{align*}
		\varphi
		&=\phi^{2n}+\sum_{k=1}^{n}\phi^{2n-k}z^k+\sum_{k=n+1}^{2n}\phi^{2n-k}z^k\nonumber\\
		&=ay^{2n}+ \sum_{k=1}^n \left(a\binom{n}{k}y^{2n-2k}x^k+\sum_{j=0}^{k-1} a^{2n-k}_{k-1-j}y^{2n-2k+1+j}x^{k-1-j}\right)z^k +\sum_{k=n+1}^{2n}\phi^{2n-k}z^k\\
		&=a\sum_{k=0}^{n}\binom{n}{k}y^{2n-2k}x^kz^k+ \sum_{k=1}^{n}\left(\sum_{j=0}^{k-1} a^{2n-k}_{k-1-j}y^{2n-2k+1+j}x^{k-1-j}\right)z^k+\sum_{k=n+1}^{2n}\phi^{2n-k}z^k\,.
	\end{align*}
	From  the last equality, we can write
	\begin{equation*}
		\varphi=a(xz+y^2)^n+z\psi, 
	\end{equation*}
	where $\psi\in F[x,y,z]$ is a homogeneous polynomial of degree equal to $2n-1$. By the linearity of the solutions of the equation (\ref{eq1}), we deduce that $z\psi$ is a solution, hence $\psi$ is also a solution.
	
	($b$) Let us suppose that: 
	\begin{equation}\label{eqhi1}
		\varphi=\phi^{2n+1}+\phi^{2n}z+\cdots+\phi^{1}z^{2n}+\phi^0z^{2n+1}\,,
	\end{equation}
	where $\phi^{2n+1-k}$ is a homogeneous  polynomial in $ F[x,y]$ of  degree equal to $2n+1-k$, for all $0\leq k\leq 2n+1$.  By Lemma \ref{psedp}, we get the following equalities:  
	\begin{align}	
		\phi^{2n+1}&=ay^{2n+1}\label{eqi7}\\	
		2y\phi^{2n-k}_x&=\phi^{2n+1-k}_y\,\,\, \textnormal{para todo}\,\,\, k=0,1,\dots, 2n-1\,.\label{eqi8}
	\end{align}
	As in the even case, we claim that there are constants $a,b_{k-j}^{2n-k}$,  with $0\leq k\leq n-1$ and $0\leq j\leq k$, such that:
	\begin{equation}\label{eqodd}
		\phi^{2n-k}=P^{2n+1}_{k}ay^{2n-2k-1}x^{k+1}+\sum_{j=0}^{k}b^{2n-k}_{k-j}y^{2n-2k+j}x^{k-j}\,,
	\end{equation}
	where 
	$
	P^{2n+1}_{k}=\frac{(2n+1)\cdot(2n+1-2)\cdots (2n+1-2k)}{2^{k+1}\cdot (k+1)!}
	$
	for all index $0\leq k\leq n-1$. 
	
	Replacing $k=0$ in the equation (\ref{eqi7}) and joining with  (\ref{eqi8}), we have
	\begin{equation*} 
		2y\phi^{2n}_x=\phi^{2n+1}_y=(2n+1)ay^{2n}\,,
	\end{equation*} 
	hence $\phi_x^{2n}=\cfrac{(2n+1)}{2}\, \, ay^{2n-1}$.
	Integrating with respect to $x$, we obtain
	\begin{equation*}
		\phi^{2n}=\cfrac{(2n+1)}{2}\,a y^{2n-1}x+b^{2n}_{0}y^{2n}
	\end{equation*}
	thus \eqref{eqodd} is true for $k=0$. 
	Suppose that we have \eqref{eqodd}. Taking the derivative with respect to $y$, we have
	$$\phi^{2n-k}_y=P^{2n+1}_{k}(2n-2k-1)ay^{2n-2k-2}x^{k+1}+\sum_{j=0}^{k}(2n-2k+j)b^{2n-k}_{k-j}y^{2n-2k+j-1}x^{k-j}\,.$$
	From the equality (\ref{eqi8}), we get 
	$$\phi^{2n-k-1}_x=\frac{P^{2n+1}_{k} (2n-2k-1)}{2}\, ay^{2n-2k-3}x^{k+1}+\sum_{j=0}^{k}\frac{(2n-2k+j)b^{2n-k}_{k-j}}{2}y^{2n-2k+j-2}x^{k-j}\,.$$
	Integrating with respect to $x$, we find a constant $b^{2n-k-1}_{0}\in F$ such that $\phi^{2n-k-1}$ is equal to
	$$P^{2n+1}_{k+1}ay^{2n-2k-3}x^{k+2}+\sum_{j=0}^{k}\frac{(2n-2k+j)b^{2n-k}_{k-j}}{2(k-j+1)}y^{2n-2k+j-2}x^{k-j+1}+b^{2n-k-1}_{0}y^{2n-k-1}\,,$$
	taking into account that $ \displaystyle P^{2n+1}_{k+1}=\frac{P^{2n+1}_{k}\cdot (2n-2k-1)}{2\cdot (k+2)}$. Defining $$\displaystyle b^{2n-k-1}_{k+1-j}=\frac{(2n-2k+j)b^{2n-k}_{k-j}}{2(k-j+1)}$$ for all $0\leq j\leq k $, we get
	$$\phi^{2n-k-1}=P^{2n+1}_{k+1}\, ay^{2n-2k-3}x^{k+2}+\sum_{j=0}^{k+1}b^{2n-k-1}_{k+1-j}y^{2n-2k+j-2}x^{k-j+1}$$
	as wished, and this completes the recurrence. In particular, replacing $k=n-1$ in \eqref{eqodd}, we obtain:
	\begin{equation*}
		\phi^{n+1}=\frac{1\cdot 3\cdots (2n+1)}{2^{n}\cdot n!}ayx^{n}+\sum_{j=0}^{n-1}b^{n+1}_{n-1-j}y^{j+2}x^{n-1-j}\,.
	\end{equation*}
	Considering the equation (\ref{eqi8}) for  $k=n$,  we get $2y\phi^{n}_x=\phi^{n+1}_y$, hence
	\begin{align}
		2y\phi^{n}_x&=\frac{1\cdot 3\cdots (2n+1)}{2^{n-1}\cdot n!}ax^{n}+\sum_{j=0}^{n-1}(j+2)b^{n+1}_{n-1-j}y^{j+1}x^{n-1-j}\,.
	\end{align}
	From this equality, we deduce that $a=0$, and consequently $\phi^{2n+1}=0$. Therefore,
	\begin{align*}
		\varphi=z\phi^{2n}+z^2\phi^{2n-1}+\cdots+ z^{2n}\phi^{1}+\phi^0 z^{2n+1}=z\psi\,,
	\end{align*}
	where $\psi\in F[x,y,z]$  is a homogeneous polynomial  of degree $2n$. Finally, arguing as in item ($a$), we deduce that $\psi$ is also a solution.
	
\end{proof}

\section{Proof of the main results}
\label{sectproofs}

We are now ready to prove the main theorems. 

\begin{proof}[Proof of Theorem \ref{Teorpr}]
	Suppose that $\varphi=p(xz+y^2,z)$, where $p$ is a polynomial in $F[t_1,t_2]$. Replacing the partial derivatives 
	\begin{align*}
		\varphi_x&= z\cdot p_{t_1}(xz+y^2,z)\,,\\  
		\varphi_y&= 2y\cdot p_{t_1}(xz+y^2,z)\,.
	\end{align*}
	in the linear PDE \eqref{eq1}, one readily verifies that $\varphi=p(xz+y^2,z)$ is a solution. Reciprocally, suppose that $\varphi\in F[x,y,z]$ is a solution of the equation  (\ref{eq1}). We shall prove that $\varphi$ has the form  $\varphi=p(xz+y^2,z)$, where $p$ is a polynomial in $F[t_1,t_2]$. By Lemma \ref{lemahom}, we can assume that $\varphi\in F[x,y,z]$ is a homogeneous polynomial. More precisely, we shall prove by recurrence the following: 
	\begin{equation}\label{eqhf}
		\varphi=\left\{ \begin{array}{rcl}
			\sum_{k=0}^{n}a_{2k}z^{2k}(xz+y^2)^{2n-2k}\,\,\,\,\,\,\,\,\,\,\,& \mbox{if} & \deg \varphi=2n\,,\\
			& & \\
			\sum_{k=1}^{n}a_{2k-1}z^{2k-1}(xz+y^2)^{2n-2k} & \mbox{if} & \deg \varphi=2n-1\,.
		\end{array}
		\right. 
	\end{equation}
	Indeed, applying Proposition \ref{pr1} for $n=1$, we obtain:
	\begin{equation*}
		\varphi=\left\{ \begin{array}{ccl}
			a(xz+y^2)+z\psi\,\,\,\,\,\,\,\,\,\,\,& \mbox{if} & \deg \varphi=2\,,\\
			& & \\
			z\psi_1  & \mbox{if} & \deg \varphi=1\,,
		\end{array}
		\right. 
	\end{equation*}
	where $\psi$ and $\psi_1$ are homogeneous polynomials with $\deg \psi=1$ and $\deg\psi_1=0$. Then there are constants $b,c\in F$ such that  $\psi=cz$ and $\psi_1=b\in F$, so \eqref{eqhf} is true for $n=1$.\\
	Let us suppose that \eqref{eqhf} holds for homogeneous polynomial solutions of degree $2n$ or $2n-1$.
	If $\deg\varphi=2n+2$, then by Proposition \ref{pr1} ($a$), we obtain that $\varphi=a(xz+y^2)^{2n+2}+z\psi$, where $\psi$ is a solution of the equation \eqref{eq1}. By the inductive hypothesis, we have 
	$$
	\psi=\sum_{k=1}^{n}a_{2k+1}z^{2k-1}(xz+y^2)^{2n-2k}\,.
	$$
	Hence,
	\begin{align*}
		\varphi&=a(xz+y^2)^{2n+2}+z\psi\\
		&=a(xz+y^2)^{2n+2}+z\left(\sum_{k=1}^{n}a_{2k+1}z^{2k-1}(xz+y^2)^{2n-2k}\right)
		\,.
	\end{align*}
	Therefore, $\varphi$ has the form $\varphi=p(xz+y^2,z)$, as wished. 
	On the other hand, if $\deg\varphi=2n+1$, then by Proposition \ref{pr1} ($b$), we obtain that $\varphi=z\psi$, where $\psi$ is a solution of the equation (\ref{eq1}) with $\deg \psi=2n$. By the inductive hypothesis, one has
	$$
	\psi=\sum_{k=0}^{n}a_{2k}z^{2k}(xz+y^2)^{2n-2k}\,.
	$$
	Hence,
	\begin{align*}
		\varphi&=z\psi\\
		&=z\left(\sum_{k=0}^{n}a_{2k}z^{2k}(xz+y^2)^{2n-2k}\right)\\
		&=\sum_{k=0}^{n}a_{2k}z^{2k+1}(xz+y^2)^{2n-2k}\\
		&=\sum_{k=1}^{n+1}a_{2k-2}z^{2k-1}(xz+y^2)^{2(n+1)-2k}\,,
	\end{align*}
	thus $\varphi$ has the form $\varphi=p(xz+y^2,z)$, and this completes the proof.
\end{proof}

\begin{proof}[Proof of the Corollary \ref{mainth}]
	One implication is straightforward. Suppose that the determinant of $ J(f,g,h)$ is a nonzero constant. By Proposition \ref{detedp}, $\varphi$ satisfies the linear PDE \eqref{eq1}. By Theorem \ref{Teorpr}, the solution of the equation \eqref{eq1} has the form $\varphi=p(xz+y^2,z)$, where $p$ is a polynomial in $F[t_1,t_2]$. Finally, by Proposition \ref{automp},  $(f,g,h)$ is an automorphism and its inverse is given in \eqref{pinverse}. 
\end{proof}

\section{Conclusions}

In Corollary \ref{mainth}, we show that the Nagata automorphisms of the form \eqref{eqnag} satisfy the well-known Jacobian conjecture. The proof is based on algebraic techniques for solving first-order linear partial differential equations. This result suggests a new strategy for approaching this longstanding open problem.

\end{document}